\theoremstyle{plain}
\newtheorem{theorem}{Theorem}[section]
\newtheorem{lemma}[theorem]{Lemma}
\newtheorem{corollary}{Corollary}
\theoremstyle{definition}
\newtheorem{definition}[theorem]{Definition}
\newtheorem{conjecture}[theorem]{Conjecture}
\newtheorem{example}[theorem]{Example}
\theoremstyle{remark}
\begin{document}
	
	\title [Birkhoff-James extensions of continuous functions]{Birkhoff-James extensions of continuous functions on  metric spaces}
	
	%    Information for first author
	\author[Saptak Bhattacharya]{Saptak Bhattacharya}
	
	\address[Bhattacharya]{Department of Mathematics\\ Jadavpur University\\ Kolkata 700032\\ India}
	\email{saptak1284@gmail.com}

	%\thanks will become a 1st page footnote.
	\thanks{Saptak Bhattacharya would like to thank the National Board of Higher Mathematics for supporting him financially during his Masters}

	%    General info
	\subjclass[2010]{Primary 26A51, Secondary 54E45, 46B20}
	\keywords{continuous functions; Birkhoff-James extensions; metric spaces; Bhatia-\v{S}emrl property}

	%    General info

	\date{}
	\maketitle
    \begin{abstract} 
    	 In this paper, we extend the investigations regarding Birkhoff-James orthogonality of linear operators to bounded continuous functions on metric spaces. We introduce Birkhoff-James extensions of continuous functions and study them in detail, in the separate contexts of compact and non-compact metric spaces. We conclude by discussing an application of our ideas to the study of Birkhoff-James orthogonality in $C(X)$ with the supremum norm, where $X$ is a compact metric space.
    \end{abstract}
	
	\section{Introduction}
	%\vspace{5mm}
	
	Given two elements $x$ and $y$ of a nomed linear space $\mathbb{X}$ over $\mathbb{F}$($\mathbb{R}$ or $\mathbb{C}$), we say $y$ is Birkhoff-James orthogonal to $x$, or $x\perp_{B}y$ , if $||x + \lambda y||\geq||x||$ for all $\lambda \in\mathbb{F}$. This generalizes the concept of orthogonality in inner product spaces to arbitrary normed spaces. First introduced by Birkhoff in \cite{GB}, this notion has played an important role in the study of the geometry of normed linear spaces. James \cite{RCJ, RCJa} used it extensively in his study of geometric properties of normed linear spaces like smoothness and strict convexity. Recent studies \cite{MPRS, RBPS, SP, SPMR, SPM} of this phenomenon in the context of operator theory have revealed a lot about the geometry of linear operators. Indeed, if $T$ and $U$ be a pair of bounded linear operators on a Banach space $\mathbb{B}$ over $\mathbb{F}$($\mathbb{R}$ or $\mathbb{C}$), we say $T\perp_{B}U$ if $||T + \lambda U||\geq||T||$ for all $\lambda\in\mathbb{F}$, where $||.||$ denotes the usual operator norm.\medskip
	%\vspace{3mm}
	 
	Bhatia and \v{S}emrl \cite{RBPS} and Paul \cite{KP} independently showed that given two linear operators $T$ and $U$ on a finite dimensional inner product space $\mathbb{X}$, $T\perp_{B}U$ if and only if there exists some $x\in X$ with $||x||=1$ and $||Tx||=||T||$ such that $\langle Tx, Ux \rangle=0$. It had been conjectured by Bhatia and \v{S}emrl \cite{RBPS} that if $T$ is an operator on a finite dimensional normed linear space $\mathbb{X}$, $T\perp_{B} U$ for some operator $U$ on $\mathbb{X}$ implies the existence of some $x_{0}$ such that $||Tx_{0}||=||T||||x_{0}||$ and $Tx_{0} \perp_{B} Ux_{0}$. This didn't turn out to be true, as Li and Schnieder \cite{LS} gave a counterexample in 2002. This property of linear operators is now known as the Bhatia-\v{S}emrl property, see \cite{SPH}. Sain and Paul \cite{SP} proved that given an operator $T$ on a finite dimensional real normed space $\mathbb{X}$ with unit sphere $S_{\mathbb{X}}$, $T$ satisfies the Bhatia-\v{S}emrl property if  $M_{T}=\{x \in S_{\mathbb{X}}: ||Tx||=||T||\}$ is connected under the projective identification $x\sim-x$.
	\medskip
%	\vspace{3mm}
		
	In this paper, we generalize the concept of Birkhoff-James orthogonality of linear operators to bounded continuous functions on metric spaces by introducing the notion of Birkhoff-James extensions, and give a complete characterization of the phenomenon for compact metric spaces. This generalizes the characterization of Birkhoff-James orthogonality of operators on finite dimensional normed linear spaces demonstrated in Sain \cite{DS}. We also define what it means for a continuous function on a compact metric space to satisfy the Bhatia-\v{S}emrl property, and with a proof that is different in flavour, generalize Theorem 2.1 in Sain and Paul \cite{SP}. One of the main objectives of this paper is to view these results solely from the perspective of convex analysis. In the third section, we study Birkhoff-James extensions of bounded continuous functions on non-compact metric spaces. Finally, as an application of the results obtained, we give a complete characterization of Birkhoff-James orthogonality in $C(X)$, where $X$ is a compact metric space.\medskip
	
	% \vspace{5mm}
Throughout this paper, we rely heavily on the sub-differential calculus of convex functions. 
Before we end this section we briefly mention the symbols to be used in due course. Given a function $p$ of  two variables $x$ and $t$, we denote by $p(x,\rule{3mm}{0.15mm})$ the function $t\to p(x,t)$ when $x$ is fixed. We denote by $p_{t}(x_{0}, t_{0})$ the derivative with respect to the second co-ordinate(when it exists) at the point $t_0$, keeping the first co-ordinate fixed at $x_0$. Accordingly, right and left derivatives(when they exist) shall be denoted by $p_{t+}(x_0, t_0)$ and $p_{t-}(x_0, t_0)$ respectively.

	\section{Birkhoff-James Extensions of Continuous Functions on Compact Metric Spaces}
	
	 We begin this section by defining a convex extension of a continuous function on a compact metric space.
	 
	\begin{definition} Let $(X,d)$ be a compact metric space and $f:X\to \mathbb{R}$ be a continuous function. A continuous function $p: X\times\mathbb{R} \to \mathbb{R}$ is said to be a convex extension of $f$ if the following two properties are satisfied :
	
	(i) $p(x,0)=f(x)$ for all $x\in X$.
	
	(ii) For each $x\in X$, $p(x,\rule{3mm}{0.15mm})$ is a convex function on $\mathbb{R}$.

	\end{definition}

    Recall that a  function $\phi : \mathbb{R} \to \mathbb{R}$ is said to be convex if $ \phi((1-t)x + ty ) \leq (1-t)\phi(x) + t\phi(y) $ for all $ x, y \in \mathbb{R}$ and for all $ t \in [0,1].$ \medskip
    
	Having defined a convex extension, we now define a Birkhoff-James extension of a given continuous function on a compact metric space $X$.

\begin{definition} Let $f$ be a continuous function on a compact metric space $(X,d)$. A convex extension $p: X\times\mathbb{R}\to \mathbb{R}$ is said to be a Birkhoff-James extension if $g(t)\geq g(0)$ for all $t\in \mathbb{R}$ where $g(t)=$ sup$_{x\in X}p(x,t)$.
	\end{definition}

    We note that $g$ is a convex function on $\mathbb{R}$.
	
	We observe that given any continuous function $f$ on a compact metric space $(X,d)$ and a convex function $h:\mathbb{R}\to \mathbb{R}$ such that $h(t)\geq h(0)$ for all $t\in \mathbb{R}$, the map $p:X\times\mathbb{R}\to\mathbb{R}$ defined by $p(x,t)=f(x)+h(t)$ is a Birkhoff-James extension of $f$. Infact, since the cardinality of all such functions $h$ is $2^{\aleph_{0}}$, we conclude that the cardinality of the set of all Birkhoff-James extensions of $f$ is atleast $2^{\aleph_{0}}$. Since $X$ is compact, $X \times \mathbb{R}$ is separable, and hence, the cardinality is exactly $2^{\aleph_{0}}$.
	
	The notion of Birkhoff-James extensions generalizes that of Birkhoff-James orthogonality of linear operators in the following sense : Let $S_{\mathbb{X}}$ denote the unit sphere of a finite-dimensional real normed linear space $\mathbb{X}$. Given a pair of linear operators $A$ and $B$ on $\mathbb{X}$, we consider the function $p: S_{\mathbb{X}}\times\mathbb{R}\to\mathbb{R}$ given by $p(x,t)=||Ax + tBx||$. Clearly, $p$ is a convex extension of the continuous map $x \to ||Ax||$ on $S_{\mathbb{X}}$. Then, $p$ is a Birkhoff-James extension of this map if and only if $A\perp_{B} B$.
	
	A sufficient condition for a convex extension to be a Birkhoff-James extension can be stated and proved quite simply. Let $M_{f}$ be the supremum attaining set of $f$. Then $p$ is a Birkhoff-James extension of $f$ if there exists some $\bar{x}\in M_{f}$ such that $p(\bar{x},t)\geq p(\bar{x},0)$ for all $t\in \mathbb{R}$. This can be seen from noticing that in such a case, $g(t)=$ sup$_{x\in X}p(x,t)\geq p(\bar{x},t)\geq p(\bar{x},0)=g(0)$ for all $t\in \mathbb{R}$. This condition isn't necessary, as the following example demonstrates. 
	
	\begin{example}We consider $X=[-1,1]$ with the usual metric topology, $f:[-1,1]\to \mathbb{R}$ given by $f(x)=|x|$ for all $x\in [-1,1]$ and a Birkhoff-James extension $p$ defined by $p(x,t)=|t+x|$ for all $(x,t)\in [-1,1]\times\mathbb{R}$. It's easily observed that there exists no $\bar{x}\in \{-1,1\}$ such that $p(\bar{x},t)\geq p(\bar{x},0) = g(0)$.\end{example}
	
	Now, we define what it means for a continuous function on a compact metric space to satisfy the Bhatia-\v{S}emrl property.
	
	\begin{definition} Let $X$ be a compact metric space and $f:X\to \mathbb{R}$ be continuous. Let $M_{f}=\{x\in X:f(x)=$ sup$_{y\in X}f(y)\}$. Then, $f$ is said to satisfy the Bhatia-\v{S}emrl property if for every Birkhoff-James extension $p$ of $f$, there exists $\bar{x}\in M_{f}$ such that $p(\bar{x},t)\geq p(\bar{x}, 0)=f(\bar{x})$ for all $t \in \mathbb{R}$.\end{definition}
	
	We are now going to prove a sufficient condition for a continuous function on a compact metric space to satisfy the Bhatia-\v{S}emrl property, thereby generalizing Theorem 2.1 in Sain and Paul \cite{SP}. But before that, we state an important lemma that is going to be useful for our purposes.
	
	\begin{lemma}\label{l-conv} Let $I$ be a non-empty open subinterval of $\mathbb{R}$. Let $f_{n}: I\to \mathbb{R}$ be a sequence of convex functions on $I$ converging pointwise to a function $f$. Then, $f$ is convex and for any $t\in I$, limsup$_{n\to\infty} f_{n}^{\prime}(t+)\leq f^{\prime}(t+)$.\end{lemma}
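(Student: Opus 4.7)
The plan is to handle the two assertions separately. For convexity of the limit $f$, I would fix any $x, y \in I$ and $s \in [0,1]$, write the convexity inequality $f_n((1-s)x + sy) \leq (1-s)f_n(x) + sf_n(y)$, and take $n \to \infty$. Pointwise convergence on the three relevant points immediately yields $f((1-s)x + sy) \leq (1-s)f(x) + sf(y)$, so $f$ is convex. In particular, $f$ admits right-derivatives at every interior point of $I$, so the expression $f'(t+)$ in the conclusion makes sense.

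For the second assertion, the key analytic fact is the standard slope inequality for convex functions: if $g$ is convex on $I$ and $t, s \in I$ with $s > t$, then
\[
g'(t+) \leq \frac{g(s) - g(t)}{s - t}.
\]
This follows from the monotonicity of difference quotients of a convex function. I would apply this to each $f_n$, so that for any $s > t$ with $s \in I$,
\[
f_n'(t+) \leq \frac{f_n(s) - f_n(t)}{s - t}.
\]

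Now I would take $\limsup_{n \to \infty}$ on both sides. Since $f_n(s) \to f(s)$ and $f_n(t) \to f(t)$ by hypothesis, the right-hand side converges to $(f(s) - f(t))/(s - t)$, giving
\[
\limsup_{n \to \infty} f_n'(t+) \leq \frac{f(s) - f(t)}{s - t}
\]
for every $s \in I$ with $s > t$. Finally, I would let $s \to t+$; since $f$ is convex, the difference quotient on the right decreases to $f'(t+)$, producing the desired inequality $\limsup_{n \to \infty} f_n'(t+) \leq f'(t+)$.

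The argument is essentially routine once one recalls the monotonicity of difference quotients of convex functions, so I do not anticipate a serious obstacle. The only subtle point to keep in mind is that $t$ is an interior point of $I$ (so there is room to the right to choose $s > t$ within $I$), and that pointwise convergence at the two points $s$ and $t$ is all that is needed to pass to the limit in the slope quotient; no uniform convergence is required for this step.
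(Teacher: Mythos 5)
Your proposal is correct and follows essentially the same route as the paper: both arguments bound $f_n'(t+)$ by the difference quotient over a fixed increment to the right, pass to the limit in $n$ using pointwise convergence at the two endpoints, and then shrink the increment using the monotonicity of difference quotients of the convex limit $f$. The only difference is cosmetic (you take the two limits in sequence, while the paper interleaves them with an $\varepsilon$--$\delta$ bookkeeping), so no further comment is needed.
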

	
	\begin{proof} Given $x,y\in\mathbb{R}$, by convexity of $f_{n}$, we have $f_{n}(\lambda x+(1- \lambda)y)\leq\lambda f_{n}(x)+ (1- \lambda)f_{n}(y)$ for all $\lambda\in [0,1]$. Taking $n \to \infty$, we see that $f(\lambda x + (1- \lambda)y)\leq \lambda f(x) + (1- \lambda)f(y)$ for all $\lambda \in [0,1]$. Thus, $f$ is convex.
	
	Fixing $t\in I$, we choose $\varepsilon\textgreater0$. We now choose $\delta\textgreater0$ such that $t+\delta\in I$ and $\frac{f(t+ \delta) - f(t)}{\delta}\leq f^{\prime}(t+) + \varepsilon$. 
	
	Clearly, 
	
	$$\frac{f_{n}(t+ \delta) - f_{n}(t)}{\delta}\to \frac{f(t+ \delta) - f(t)}{\delta}\textrm{as}n\to \infty$$
	
	Hence, there exists $N\in \mathbb{N}$ such that 
	
	$$\frac{f_{n}(t+ \delta) - f_{n}(t)}{\delta}\leq\frac{f(t+ \delta) - f(t)}{\delta}+\varepsilon\leq f^{\prime}(t+) + 2\varepsilon$$ 
	
	for all $n\geq N$. 
	
	But since each $f_{n}$ is convex, $f_{n}^{\prime}(t+)\leq\frac{f_{n}(t + \delta) - f_{n}(t)}{\delta}$ for all $n\in \mathbb{N}$. Therefore, for all $n\geq N$, $f_{n}^{\prime}(t+)\leq f^{\prime}(t+) + 2\varepsilon$. Since $\varepsilon$ was arbitrary, it follows that limsup$_{n\to \infty}f_{n}^{\prime}(t+)\leq f^{\prime}(t+)$. Hence, we have proved our lemma. \end{proof}

	We are now ready to state and prove our first theorem.
	
	\begin{theorem}\label{th-suff}
		 Let $(X,d)$ be a compact metric space, $f:X\to \mathbb{R}$ be continuous and $M_{f}$ be defined as above. Then, $f$ satisfies the Bhatia-\v{S}emrl property if $M_{f}$ is connected.
	\end{theorem}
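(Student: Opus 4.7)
The plan is to fix an arbitrary Birkhoff-James extension $p$ of $f$ and exhibit a point $\bar{x} \in M_{f}$ at which the convex function $p(\bar{x},\cdot)$ attains its minimum at $t=0$. Partition $M_{f}$ into the two sets
\[
A := \{x \in M_{f} : p(x,t) \geq f(x) \text{ for all } t \geq 0\}, \qquad B := \{x \in M_{f} : p(x,t) \geq f(x) \text{ for all } t \leq 0\}.
\]
Any point of $A \cap B$ witnesses the Bhatia-\v{S}emrl property for $p$, so the task reduces to producing one. Closedness of $A$ and $B$ in $X$ is immediate, since each is an intersection of the closed set $M_{f}$ with sets of the form $\{x : p(x,t) - p(x,0) \geq 0\}$, and these are closed by continuity of $p$.

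Next I would verify $A \cup B = M_{f}$. Given $x \in M_{f} \setminus A$, pick $t_{0} > 0$ with $p(x,t_{0}) < f(x)$. For any $t < 0$, writing $0$ as the convex combination $\tfrac{t_{0}}{t_{0}-t}\,t + \tfrac{-t}{t_{0}-t}\,t_{0}$ and invoking convexity of $p(x,\cdot)$ gives $p(x,0) \leq \tfrac{t_{0}}{t_{0}-t}\,p(x,t) + \tfrac{-t}{t_{0}-t}\,p(x,t_{0})$; rearranging and using $p(x,t_{0}) < p(x,0)$ yields $p(x,t) > f(x)$, so $x \in B$. The reverse inclusion is symmetric.

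The step I expect to be the main obstacle is showing $A$ and $B$ are non-empty. To handle $A$, pick a sequence $s_{n} \downarrow 0$ and use compactness of $X$ together with continuity of $p(\cdot, s_{n})$ to choose $x_{n} \in X$ with $p(x_{n}, s_{n}) = g(s_{n})$. A subsequence converges to some $x^{*} \in X$. Since $g$ is convex and everywhere finite on $\mathbb{R}$, it is continuous there, so
\[
p(x^{*},0) = \lim_{n} p(x_{n}, s_{n}) = \lim_{n} g(s_{n}) = g(0),
\]
which shows $x^{*} \in M_{f}$. Now fix any $t > 0$ and take $n$ large enough that $s_{n} < t$; applying convexity of $p(x_{n},\cdot)$ at $s_{n} = (1 - s_{n}/t)\cdot 0 + (s_{n}/t)\cdot t$ gives $p(x_{n}, s_{n}) \leq (1-s_{n}/t)\,p(x_{n},0) + (s_{n}/t)\,p(x_{n},t)$. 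Because $p(x_{n}, s_{n}) = g(s_{n}) \geq g(0) \geq p(x_{n},0)$, rearranging shows
\[
p(x_{n}, t) \geq p(x_{n}, 0) + \frac{t}{s_{n}}\bigl(p(x_{n}, s_{n}) - p(x_{n}, 0)\bigr) \geq p(x_{n}, 0),
\]
and passing to the limit in $n$ gives $p(x^{*}, t) \geq f(x^{*})$; hence $x^{*} \in A$. The symmetric construction with $s_{n} \uparrow 0$ produces a point of $B$.

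Finally, since $M_{f}$ is connected and we have written it as the union of two non-empty closed subsets $A$ and $B$, they cannot be disjoint. Any $\bar{x} \in A \cap B$ then satisfies $p(\bar{x}, t) \geq f(\bar{x}) = p(\bar{x}, 0)$ for every $t \in \mathbb{R}$, establishing the Bhatia-\v{S}emrl property for $f$.
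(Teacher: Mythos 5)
Your proof is correct, and it takes a genuinely different and noticeably more elementary route than the paper's. The paper reduces to the truncated sets $E_{m}=\{x\in M_{f}:p(x,t)\geq p(x,0)\ \forall t\in[-m,m]\}$ via the finite intersection property, and then works with one-sided derivatives: it extracts maximizers $x_{n}$ at differentiability points $t_{n}\downarrow 0$ of $g$, invokes Lemma~\ref{l-conv} (upper semicontinuity of right derivatives under pointwise convergence of convex functions, applied after establishing uniform convergence of $p(x_{n},\cdot)$ to $p(x,\cdot)$) to produce a point with $p_{t+}(x,0)\geq 0$, and finally separates $M_{f}$ into the \emph{open} sets $U=\{p_{t+}(\cdot,0)>0\}$ and $V=\{p_{t-}(\cdot,0)<0\}$. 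You instead separate $M_{f}$ into the \emph{closed} sets $A$ and $B$ defined by global one-sided minimality, whose closedness is immediate (an intersection of closed sets), and you replace the entire derivative machinery by two direct applications of the three-point convexity (slope) inequality: once to show $M_{f}=A\cup B$, and once to show $p(x_{n},t)\geq p(x_{n},0)$ for the maximizers at level $s_{n}$, which survives the limit $x_{n}\to x^{*}$ by plain continuity of $p$. All the supporting steps check out: $g$ is finite and convex, hence continuous, so $x^{*}\in M_{f}$; the convex combinations you write down have nonnegative coefficients summing to one; and the inequality $p(x_{n},s_{n})=g(s_{n})\geq g(0)\geq p(x_{n},0)$ is exactly where the Birkhoff-James hypothesis enters. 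What your approach buys is the elimination of Lemma~\ref{l-conv}, of the uniform-convergence step, and of the slightly delicate verification that $U$ and $V$ are open; what the paper's approach buys is the derivative-level information ($p_{t+}(x,0)\geq 0$ at some $x\in M_{f}$), which it then reuses verbatim to prove Theorem~\ref{th-char}, so the extra machinery is not wasted in the larger scheme of the paper.
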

	\begin{proof} Given a Birkhoff-James extension $p$ of $f$, let $E_{m}=\{x\in M_{f}:p(x,t)\geq p(x,0)\forall t\in [-m,m]\}$ for all $m\in \mathbb{N}$. Its easy to observe that $E_{m+1}\subset E_{m}$ for all $m\in \mathbb{N}$ and each $E_{m}$ is a closed subset of $M_{f}$ and therefore, compact. Thus, if we can show that each $E_{m}$ is non-empty, it will follow that their intersection is also non-empty, thereby implying that $f$ satisfies the Bhatia-\v{S}emrl property. We shall, therefore, show that $E_{m}$ is non-empty for all $m\in \mathbb{N}$.\medskip
	
	Fixing $m\in \mathbb{N}$, we note a few basic facts about convex functions first. A convex function $h:[-m,m]\to \mathbb{R}$ attains its minima at $0$ if and only if $h^{\prime}(0+)\geq 0$ and $h^{\prime}(0-)\leq 0$, and in particular, if either $h^{\prime}(0+)$ or $h^{\prime}(0-)$ equals zero. Also, the set of points where $h$ is not differentiable is countable, and therefore, the set of differentiability points is dense in the domain interval.\medskip
	
	Since $p$ is a Birkhoff-James extension, $g(t)=$ sup$_{x\in X}p(x,t)\geq g(0)$ for all $t\in [-m,m]$. Clearly, $g$ is convex, so $g^{\prime}(0+)\geq 0$ and $g^{\prime}(0-)\leq 0$. We choose a sequence $t_{n}\textgreater 0$ such that $t_{n}\to 0+$ and $g^{\prime}(t_{n})$ exists. For each $t_{n}$, we choose $x_{n}$ such that $p(x_{n},t_{n})=$ sup$_{x\in X}p(x,t_{n})=g(t_{n})$. Then its easy to observe that $p_{t+}(x_{n}, t_{n}) \leq g^{\prime}(t_{n}+)$ and $p_{t-}(x_{n}, t_{n}) \geq g^{\prime}(t_{n}-)$. Since $g$ is differentiable at each $t_{n}$ it follows that $g^{\prime}(t_{n})=p_{t}(x_{n}, t_{n})\geq 0$ for all $n\in \mathbb{N}$. Since $X$ is compact, we can replace $x_{n}$ with a convergent subsequence of it. Due to notational convenience, we, therefore, assume that $x_{n}$ converges to some $x\in X$. Then, $p(x_{n}, t_{n})\to p(x,0)$. Again, $p(x_{n}, t_{n})=g(t_{n})$ for all $n\in \mathbb{N}$, and therefore, since $g$ is continuous, taking $n\to \infty$ we observe that $p(x,0)=g(0)$, implying that $x\in M_{f}$.\medskip
	
	Now since $p$ is continuous on $X\times[-m,m]$ with the standard product metric, it is uniformly  continuous and therefore, given any $\varepsilon\textgreater0$ there exists $\delta\textgreater0$ such that whenever $d(x,y)\leq\delta$, where $d$ is the metric on $X$, $|p(x,t)-p(y,t)|\leq \varepsilon$ for all $t\in [-m,m]$. From here, it follows that since $x_{n}\to x$, $p(x_{n}, \rule{3mm}{0.15mm})$ converges to $p(x,\rule{3mm}{0.15mm})$ uniformly on $[-m,m]$. continuous and therefore, given any $\varepsilon\textgreater 0$ there exists $\delta \textgreater 0$ such that whenever This implies, by lemma \ref{l-conv} that given any $\lambda\in (-m,m)$, limsup$_{n\to \infty}p_{t+}(x_{n}, \lambda)\leq p_{t+}(x, \lambda)$.\medskip
	
	Let $\varepsilon \textgreater 0$. Choosing $s$ such that $p_{t+}(x, s) \leq p_{t+}(x,0)+\varepsilon$, we choose $N\in \mathbb{N}$ such that $t_{n}\textless s$ for all $n\geq N$. Thus, for all $n\geq N$, $p_{t}(x_{n}, t_{n}) \leq p_{t+}(x_{n}, s) \leq p_{t+}(x, 0)+ \varepsilon$. This implies that limsup$_{n\to \infty}p_{t}(x_{n}, t_{n}) \leq p_{t+}(x, 0)$ since $\varepsilon$ was arbitrary. Now, $p_{t}(x_{n}, t_{n})\geq 0$ for all $n\in \mathbb{N}$, and therefore, $p_{t+}(x,0)\geq 0$. If $p_{t+}(x,0)=0$ then we are done, so, we assume $p_{t+}(x,0)\textgreater 0$. Let $U=\{x\in M_{f}:p_{t+}(x, 0)\textgreater 0\}$ and $V=\{x\in M_{f}:p_{t-}(x, 0)\textless 0\}$.\medskip
    
    We have already shown that $U$ is non-empty. Similarly, it follows that $V$ is non-empty too. Assuming there doesn't exist any $x\in M_{f}$ such that either $p_{t+}(x,0)$ or $p_{t-}(x,0)$ equals zero, since that would prove our theorem, and using the fact that for any $x\in X$, $p_{t-}(x,0)\leq p_{t+}(x,0)$ since $p(x, \rule{3mm}{0.15mm})$ is convex, we conclude that $M_{f}=U \cup V$. If $U$ and $V$ aren't disjoint, we are done, so we assume that they are disjoint. Now, if $x\in U$, then $p(x,0)\textgreater p(x, b)$ for some $b\in [-m, 0)$. We choose $\varepsilon\textgreater 0$ such that $p(x,b)+\varepsilon\textless p(x,0)-\varepsilon$. We choose $\delta\textgreater 0$ such that for all $y$ satisfying $d(x,y)\textless \delta$, $|p(x,t)-p(y,t)|\textless \frac{\varepsilon}{2}$ for all $t\in[-m,m]$, thereby implying that $p(y,b)\textless p(y,0)$ for all such $y$ and $p_{t+}(y,0)\textgreater 0$. Thus, $U$ is open. Similarly, $V$ is also open and $U$ and $V$ form a separation of $M_{f}$. This is a contradiction to the fact that $M_{f}$ is connected. Therefore, $f$ satisfies the Bhatia-\v{S}emrl property, and the proof is complete. \end{proof}
    
    As a simple application of Theorem \ref{th-suff}, it can be shown that the set of all continuous functions $f$ on a compact metric space $X$ satisfying the Bhatia-\v{S}emrl property is dense in $C(X)$. We state it below as a corollary.
    
    \begin{corollary} Let $(X,d)$ be a compact metric space. Then the set of all $f\in C(X)$ such that $f$ satisfies the Bhatia-\v{S}emrl property is dense in $C(X)$, which is endowed with the usual supremum norm.\end{corollary}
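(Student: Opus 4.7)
The plan is to combine Theorem~\ref{th-suff} with a one-line perturbation argument. Since any $g \in C(X)$ whose supremum is attained at a single point has $M_g$ trivially connected (a singleton), Theorem~\ref{th-suff} immediately guarantees that such a $g$ satisfies the Bhatia-\v{S}emrl property. Accordingly, it will suffice to show that continuous functions with a uniquely attained maximum are dense in $C(X)$.

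To carry this out, I would fix $f \in C(X)$ and $\varepsilon > 0$. The case where $X$ consists of a single point is vacuous, so assume $\operatorname{diam}(X) > 0$. Since $X$ is compact and $f$ is continuous, $M_f$ is non-empty; I would pick any $x_0 \in M_f$ and define
$$ g(x) \;=\; f(x) \;-\; \frac{\varepsilon}{\operatorname{diam}(X)}\, d(x, x_0), \qquad x \in X. $$
Clearly $g \in C(X)$, and since $0 \leq d(x, x_0) \leq \operatorname{diam}(X)$ for every $x \in X$, the supremum distance $\sup_{x \in X}|f(x) - g(x)|$ is at most $\varepsilon$.

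It then remains to verify that $M_g = \{x_0\}$, which is immediate. We have $g(x_0) = f(x_0) = \sup_X f$. For any $y \neq x_0$, if $f(y) < f(x_0)$ then $g(y) \leq f(y) < g(x_0)$, while if $f(y) = f(x_0)$ then $d(y, x_0) > 0$ forces the subtracted term to be strictly positive, again giving $g(y) < g(x_0)$. Thus the supremum of $g$ on $X$ is attained only at $x_0$, so $M_g$ is a singleton and hence connected. Theorem~\ref{th-suff} then delivers the Bhatia-\v{S}emrl property for $g$, completing the density argument. There is no genuine obstacle here --- the sufficient condition of Theorem~\ref{th-suff} is permissive enough that even the crude strategy of penalizing distance from a chosen maximizer suffices; the only mild care required is separating out the degenerate one-point case.
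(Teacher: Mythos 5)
Your proposal is correct and follows essentially the same route as the paper: perturb $f$ by subtracting a small continuous penalty that vanishes only at a chosen $x_0\in M_f$, so that the new function has $M_g=\{x_0\}$, and then invoke Theorem~\ref{th-suff}. The paper uses the penalty $\varepsilon\, d(x,x_0)/(d(x,x_0)+d(x,y_0))$ for some $y_0\neq x_0$ rather than your $\varepsilon\, d(x,x_0)/\operatorname{diam}(X)$, but this is an immaterial difference.
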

    
    \begin{proof}
    If $X$ is a singleton, then it's obvious. Otherwise, if $f\in C(X)$, we choose $x_{0}\in M_{f}$ and $y_{0}\in X \setminus \{x_{0}\}$. Then, for every $\varepsilon\textgreater0$ we consider the map $f_{\varepsilon}: X \to \mathbb{R}$ given by 
    
    $$f_{\varepsilon}(x)=f(x)-\frac{\varepsilon d(x, x_{0})}{d(x, x_{0}) + d(x, y_{0})}$$

    for all $x\in X$. Then $f_{\varepsilon}$ is continuous and $|f(x) - f_{\varepsilon}(x)|\leq\varepsilon$ for all $x\in X$. Also, $f_{\varepsilon}(x)\leq f(x)$, sup$_{x\in X}f_{\varepsilon}(x)=$ sup$_{x\in X}f(x)$ and $M_{f_{\varepsilon}}=\{x_{0}\}$, which is connected. Thus, by Theorem \ref{th-suff}, $f_{\varepsilon}$ satisfies the Bhatia-\v{S}emrl property. Since $\varepsilon\textgreater 0$ was arbitrarily chosen, we are done.\end{proof}
    
    Our approach to Theorem \ref{th-suff} also helps us give a necessary and sufficient condition for a convex extension of a continuous function on a compact metric space to be a Birkhoff-James extension in terms of subdifferentials of the functions $p(x,t)$. We state it in the next theorem below.
    
    \begin{theorem}\label{th-char} Let $(X,d)$ be a compact metric space, $f:X\to\mathbb{R}$ be continuous, and $p:X\times \mathbb{R}\to \mathbb{R}$ be a convex extension of $f$. Then, $p$ is a Birkhoff-James extension if and only if their exist $x,y\in M_{f}$ such that $p_{t+}(x, 0)\geq0$ and $p_{t-}(y, 0)\leq0$.\end{theorem}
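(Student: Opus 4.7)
I would split the equivalence into its two implications. The sufficiency direction reduces to a one-variable observation about convex functions, while the necessity direction is essentially isolated inside the proof of Theorem~\ref{th-suff}, so almost all of the hard work has already been done there.

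For sufficiency, suppose $x, y \in M_f$ satisfy $p_{t+}(x,0) \geq 0$ and $p_{t-}(y,0) \leq 0$. The key fact is that the right derivative of a convex function on $\mathbb{R}$ is non-decreasing, so $p_{t+}(x, s) \geq p_{t+}(x, 0) \geq 0$ for every $s \geq 0$, forcing $p(x, \rule{3mm}{0.15mm})$ to be non-decreasing on $[0, \infty)$. Hence $p(x, t) \geq p(x, 0) = f(x) = \sup_X f = g(0)$ for all $t \geq 0$, which gives $g(t) \geq p(x, t) \geq g(0)$ on $[0, \infty)$. Symmetrically, $p(y, \rule{3mm}{0.15mm})$ is non-increasing on $(-\infty, 0]$, so $g(t) \geq p(y, t) \geq g(0)$ on $(-\infty, 0]$. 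Together, $g \geq g(0)$ on all of $\mathbb{R}$.

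For necessity I would extract the construction from the first half of the proof of Theorem~\ref{th-suff}. Choose $t_n \downarrow 0$ at which $g$ is differentiable (possible since the convex function $g$ has at most countably many bad points), pick $x_n \in X$ with $p(x_n, t_n) = g(t_n)$, and exploit $p(x_n, \rule{3mm}{0.15mm}) \leq g$ with equality at $t_n$ to conclude $p_t(x_n, t_n) = g'(t_n) \geq 0$, the non-negativity because $g$ is convex and minimised at $0$. By compactness pass to a subsequence $x_n \to x$; continuity of $g$ and of $p$ forces $p(x,0) = g(0)$, hence $x \in M_f$. To upgrade $p_t(x_n, t_n) \geq 0$ to $p_{t+}(x, 0) \geq 0$, use the uniform continuity of $p$ on $X \times [-1,1]$ to get uniform convergence $p(x_n, \rule{3mm}{0.15mm}) \to p(x, \rule{3mm}{0.15mm})$, bound $p_t(x_n, t_n) \leq p_{t+}(x_n, s)$ for a fixed small $s > 0$ by convexity of $p(x_n, \rule{3mm}{0.15mm})$, apply Lemma~\ref{l-conv} to send $n \to \infty$, and finally send $s \downarrow 0$ using right-continuity of $s \mapsto p_{t+}(x, s)$. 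The mirror construction with $t_n \uparrow 0$ yields $y \in M_f$ with $p_{t-}(y, 0) \leq 0$.

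The only delicate point is the final upgrade in the necessity direction, where one must transfer an inequality valid at the moving pair $(x_n, t_n)$ to an inequality at the limit pair $(x, 0)$; this is precisely what Lemma~\ref{l-conv} together with the compact-interval uniform convergence accomplishes, and it is the same device already used in Theorem~\ref{th-suff}. Once that machinery is in place the remainder of the argument is routine bookkeeping with one-sided derivatives of convex functions.
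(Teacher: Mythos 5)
Your proposal is correct and follows essentially the same route as the paper: necessity by extracting the $x_n, t_n$ construction (with Lemma~\ref{l-conv}, uniform convergence on a compact $t$-interval, and right-continuity of the right derivative) from the proof of Theorem~\ref{th-suff}, and sufficiency by the one-line convexity observation. The only cosmetic difference is that in the sufficiency direction you argue via monotonicity of $p(x,\rule{3mm}{0.15mm})$ on $[0,\infty)$ rather than via the inequalities $g^{\prime}(0+)\geq p_{t+}(x,0)$ and $g^{\prime}(0-)\leq p_{t-}(y,0)$; both rest on the same fact that $g\geq p(x,\rule{3mm}{0.15mm})$ with equality at $0$.
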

    
    \begin{proof} First, we show that the condition is necessary. As before, we choose a suitable sequence $x_{n}$ converging to some $x\in M_{f}$ and a sequence of positive reals $t_{n}\to 0+$ such that $g(t)=$ sup$_{x\in X}p(x,t)$ is differentiable at each $t_{n}$. Just like the proof of Theorem \ref{th-suff}, we have $p_{t}(x_{n}, t_{n})=g^{\prime}(t_{n})$ for all $n$. Therefore, limsup$_{n\to \infty}p_{t}(x_{n}, t_{n})=g^{\prime}(0+)\leq p_{t+}(x,0)$. But $g$ is convex and attains its minima at $0$. Thus, $p_{t+}(x, 0)\geq g^{\prime}(0+)\geq0$. Similarly, we can assert the existence of $y\in M_{f}$ such that $p_{t-}(y, 0)\leq0$, thereby showing what was required.\medskip
    
    To show that the condition is sufficient, we simply observe that whenever such $x$ and $y$ are given, $g^{\prime}(0+)\geq p_{t+}(x,0)\geq0$ and $g^{\prime}(0-)\leq p_{t-}(x,0)\leq0$, thus implying that $g(t)\geq g(0)$ for all reals $t$ and completing the proof. \end{proof}
    
    Theorem \ref{th-char}, a generalization of Theorem 2.2 in Sain \cite{DS}, completely characterizes Birkhoff-James extensions of continuous functions on compact metric spaces and basically states that whenever a Birkhoff-James extension $p$ is given, we separately have $x$ and $y$ in $M_{f}$ such that $p_{t+}(x, 0)\geq0$ and $p_{t-}(y, 0)\leq0$. When there exists one $x$ in $M_{f}$ such that both $p_{t+}(x, 0)\geq0$ and $p_{t-}(x, 0)\leq0$ the Bhatia-\v{S}emrl property is satisfied.\medskip
    
    In the following section, we study Birkhoff-James extensions of bounded continuous functions on non-compact metric spaces. There, after giving the definitions, we give an example to show that such a function defined on a non-compact metric space might not satisfy the Bhatia-\v{S}emrl property even if its supremum-attaining set is non-empty and connected, thereby demonstrating the necessity of all the conditions in Theorem \ref{th-suff}. Subsequently, we give a necessary condition for a convex extension of a bounded continuous function on a non-compact metric space to be a Birkhoff-James extension. Finally, we conclude the section with a sufficient condition regarding the same, demonstrated as an application of Lemma \ref{l-conv}.

    \section{Birkhoff-James Extensions of Continuous Functions on Non-Compact Metric Spaces}

    Our definitions of convex and Birkhoff-James extensions are going to be a bit different when the domain is non-compact. We shall be introducing a useful property in order to compensate a bit for the loss of compactness. That doesn't limit the scope of our work in any way, since we already have that property built in the study of Birkhoff-James orthogonality of bounded linear operators on Banach spaces.
    
    \begin{definition}\label{def-nc} Let $(X,d)$ be a non-compact metric space and $f:X\to \mathbb{R}$ be a bounded continuous function. A continuous map $p: X\times \mathbb{R}\to \mathbb{R}$ is said to be a convex extension of $f$ if the following properties hold :
    
    (i) For each $t\in\mathbb{R}$, $p(\rule{3mm}{0.15mm}, t)$ is a bounded function on $X$.
    
    (ii) For each $x\in X$, $p(x,0)=f(x)$ and $p(x, \rule{3mm}{0.15mm})$ is a convex function on $\mathbb{R}$.
    
    (iii) There exists a continuous map $h: \mathbb{R}\to \mathbb{R}$ such that $h(0)$ = $0$ and $|p(x,t) - p(x,s)|\leq|h(t) - h(s)|$ for all $x\in X$ and $t,s\in\mathbb{R}$.\end{definition}

    If $\mathbb{X}$ is an infinite dimensional normed linear space, it's unit sphere $S_{\mathbb{X}}$ is not compact. Given two bounded linear operators $A$ and $B$ on $\mathbb{X}$, we consider the functions $f: S_{\mathbb{X}}\to \mathbb{R}$ and $p:S_{\mathbb{X}}\times \mathbb{R}\to\mathbb{R}$ given by $f(x)=||Ax||$ for all $x\in S_{\mathbb{X}}$ and $p(x,t)=||Ax + tBx||$ for all $(x,t)\in S_{\mathbb{X}}\times\mathbb{R}$. Then, $p$ is a convex extension of $f$. Properties (i) and (ii) are obvious. Property (iii) is satisfied because for all $x\in S_{\mathbb{X}}$, $t,s\in\mathbb{R}$, $|||Ax + tBx||- ||Ax + sBx|||\leq|t-s|||Bx||\leq|t-s|||B||$ by triangle inequality and boundedness of $B$.

    \begin{definition} Let $f$ be a bounded continuous function on a non-compact metric space $(X,d)$. A convex extension $p: X\times \mathbb{R}\to \mathbb{R}$ is said to be a Birkhoff-James extension if $g(t)\geq g(0)$ for all $t\in\mathbb{R}$ where $g(t)=$ sup$_x\in Xp(x,t)$ for all $t\in\mathbb{R}$.\end{definition}

    Clearly, this generalizes the notion of Birkhoff-James orthogonality of bounded linear operators on infinite dimensional normed spaces. \medskip

    The definition of the Bhatia-\v{S}emrl property remains unchanged and gets carried over from the previous section. As mentioned before, we now give an example to demonstrate that a bounded continuous function $f$ on a non-compact metric space might not satisfy the Bhatia-\v{S}emrl property even if the supremum attaining set of $f$ is non-empty and connected.

     \begin{example}Let $X=[0, \infty)$. We consider the constant function $f \equiv1$ on $X$. The map $p:X\times \mathbb{R}\to \mathbb{R}$ given by $p(x,t)=1+te^{-x}$ is a convex extension of $f$. Also, since $e^{-x}\leq e^{0}=1$ and $e^{-x}\to 0$ as $x\to \infty$, $g(t)=$ sup$_{x\in X}p(x,t)=\textrm{max}\{1, 1+t\}$ for all $t\in \mathbb{R}$. Therefore, $p$ is a Birkhoff-James extension of $f$, but for any $x\in X$, $p(x, t)=1+te^{-x}$ for all $t\in \mathbb{R}$, which is unbounded below as a function of $t$. Therefore, the Bhatia-\v{S}emrl property is not satisfied even though $M_{f}=[0, \infty)$, which is non-empty and connected.\end{example}

     In our next theorem, we give another necessary condition for a given convex extension of a bounded continuous function $f$ to be a Birkhoff-James extension, this time assuming that the domain is non-compact, which means, in particular, that $M_{f}$ can be empty. But first, we define a maximizing sequence.
    
    \begin{definition} Let $(X,d)$ be a non-compact metric space and $f:X \to \mathbb{R}$ be a bounded continuous function. A sequence $\{x_{n}\}$ is said to be a maximizing sequence for $f$ if $f(x_{n})\to $ sup$_{x\in X}f(x)$.\end{definition}
    
    Having stated the definition, we now state and prove our next theorem.
    
    \begin{theorem}\label{th-ness} Let $(X,d)$ be a non-compact metric space and $f:X\to \mathbb{R}$ be bounded and continuous. Let $p:X\times \mathbb{R}\to \mathbb{R}$ be a Birkhoff-James extension of $f$. Let $g(t)=$ sup$_{x\in X}p(x,t)$ for all $x\in \mathbb{R}$. Then :-
    
    (i) There exists a maximizing sequence $\{x_{n}\}$ of $f$ and a sequence of positive reals $t_{n}\to 0+$ such that both the sequences $p_{t+}(x_{n}, t_{n})$ and $p_{t-}(x_{n}, t_{n})$ converge to $g^{\prime}(0+)\geq0$.
   
    (ii) There exists a maximizing sequence $\{y_{n}\}$ of $f$ and a sequence of negative reals $s_{n}\to 0+$ such that both the sequences $p_{t+}(y_{n}, s_{n})$ and $p_{t-}(y_{n}, s_{n})$ converge to $g^{\prime}(0-)\leq0$.\end{theorem}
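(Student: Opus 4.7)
The plan is to exploit the convexity of $g(t) = \sup_{x\in X} p(x,t)$ together with the Birkhoff-James property, to extract $x_n$ that approximately attain $g(t_n)$ for suitable $t_n \downarrow 0$, and then to squeeze the one-sided derivatives $p_{t\pm}(x_n, t_n)$ between the secant slopes of $g$ at $t_n$. By property (i) of Definition \ref{def-nc}, $g(t)$ is finite for every $t$, so $g$ is a finite-valued convex function on $\mathbb{R}$ (being the supremum of a family of convex functions), hence continuous; the Birkhoff-James property $g(t) \geq g(0) = \sup_X f$ then forces $g'(0+) \geq 0$ and $g'(0-) \leq 0$. Since $g$ is convex, its set of non-differentiability points is countable, and $t \mapsto g'(t+)$ is nondecreasing and right-continuous, so along any sequence $t_n \to 0+$ consisting of differentiability points one has $g'(t_n) \to g'(0+)$.

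Fix such a sequence $t_n \downarrow 0$ of differentiability points of $g$. For each $n$, using differentiability of $g$ at $t_n$, choose $\delta_n \in (0, t_n)$ small enough that both secant slopes $\frac{g(t_n \pm \delta_n) - g(t_n)}{\pm \delta_n}$ lie within $1/n$ of $g'(t_n)$. Only after $\delta_n$ has been fixed, pick $x_n \in X$ with $p(x_n, t_n) > g(t_n) - \delta_n/n$. Convexity of $p(x_n, \cdot)$ (via the standard secant/one-sided derivative inequalities at $t_n$), combined with $p(x_n, t_n \pm \delta_n) \leq g(t_n \pm \delta_n)$, then yields
\[
g'(t_n) - \tfrac{2}{n} \;\leq\; p_{t-}(x_n, t_n) \;\leq\; p_{t+}(x_n, t_n) \;\leq\; g'(t_n) + \tfrac{2}{n},
\]
and letting $n \to \infty$ sandwiches both one-sided derivatives against $g'(0+)$.

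To check that $\{x_n\}$ is a maximizing sequence for $f$, property (iii) of Definition \ref{def-nc} yields $|p(x_n, 0) - p(x_n, t_n)| \leq |h(t_n)|$; combining this with $p(x_n, t_n) > g(t_n) - \delta_n/n \geq g(0) - \delta_n/n$, we obtain $f(x_n) = p(x_n, 0) \geq g(0) - \delta_n/n - |h(t_n)|$, and continuity of $h$ at $0$ with $h(0)=0$ makes the right-hand side tend to $\sup_X f$. Part (ii) follows by the same argument applied to a sequence $s_n \to 0-$ of differentiability points with $g'(s_n) \to g'(0-)$. The principal obstacle, and the reason the loss of compactness does not derail the argument, is the two-step coordination of choices: $\delta_n$ depends only on $g$ and must be pinned down first so that the secant slopes are within $1/n$ of $g'(t_n)$; only then can $x_n$ be selected with approximation error on the order of $\delta_n/n$, small enough that dividing by $\delta_n$ in the secant inequality still produces a $o(1)$ error term.
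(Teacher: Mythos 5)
Your proposal is correct and follows essentially the same route as the paper: pick differentiability points $t_n\downarrow 0$ of the convex function $g$, first fix a secant step ($\delta_n$ here, $h$ in the paper) controlling the slopes to within $o(1)$ of $g'(t_n)$, then choose $x_n$ nearly attaining $g(t_n)$ with error small relative to that step, and sandwich $p_{t\pm}(x_n,t_n)$ between the secant slopes of $g$; the maximizing-sequence claim is verified via property (iii) exactly as in the paper. The order-of-choices point you flag as the principal obstacle is precisely the mechanism the paper uses.
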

    
    \begin{proof} We only prove (i), as (ii) follows similarly. To begin, we choose a sequence $t_{n}$ of positive reals such that $g^{\prime}(t_{n})$ exists. For each $n$ we choose a sequence $\{x_{n_{k}}\}$ indexed by $k$ such that $p(x_{n_{k}}, t_{n})\uparrow g(t_{n})$. We now choose $h\textgreater0$ sufficiently small such that $\frac{g(t_{n}+h)-g(t_{n})}{h}\leq g^{\prime}(t_{n})+ \frac{1}{2^{n+1}}$ and $\frac{g(t_{n})-g(t_{n}- h)}{n}\geq g^{\prime}(t_{n}) - \frac{1}{2^{n+1}}$.\medskip
    
    Now, let $x_{n}$ be an element of the sequence $\{x_{n_{k}}\}$ such that $g(t_{n})\leq p(x_{n}, t_{n})+ \frac{h}{2^{n+1}}$. Then,
    
    $$\frac{g(t_{n}+h)-g(t_{n})}{h}\geq\frac{p(x_{n}, t_{n}+h) - p(x_{n}, t_{n})}{h}-\frac{1}{2^{n+1}}$$
    
    We also have 
    
    $$\frac{p(x_{n}, t_{n}) - p(x_{n}, t_{n}-h)}{h}\geq \frac{g(t_{n})- g(t_{n}-h)}{h}-\frac{1}{2^{n+1}}$$
    
    Together, they imply that $p_{t+}(x_{n}, t_{n})\leq g^{\prime}(t_{n}) + \frac{1}{2^{n}}$ and $p_{t-}(x_{n}, t_{n})\geq g^{\prime}(t_{n}) - \frac{1}{2^{n}}$. But $g^{\prime}(t_{n})\to g^{\prime}(0+)$. Therefore, we conclude that limsup$_{n\to \infty}p_{t+}(x_{n}, t_{n})\leq g^{\prime}(0+)$ and liminf$_{n\to\infty}p_{t-}(x_{n}, t_{n})\geq g^{\prime}(0+)$.

    The fact that $p_{t-}(x_{n}, t_{n})\leq p_{t+}(x_{n}, t_{n})$ for all $n$ now implies that both the sequences $p_{t+}(x_{n}, t_{n})$ and $p_{t-}(x_{n}, t_{n})$ converge to $g^{\prime}(0+)$.
    
    It only remains to be shown that the sequence $\{x_{n}\}$ is maximizing. To prove this, we note that $g(t_{n}) - \frac{1}{2^{n+1}}\leq p(x_{n}, t_{n})\leq g(t_{n})$. Thus, by continuity of $g$, $p(x_{n}, t_{n})\to g(0)$. Now by property (iii) of Definition \ref{def-nc}, $|p(x_{n}, t_{n}) - p(x_{n}, 0)|\to 0$. Thus, $p(x_{n}, 0)\to g(0)$, thereby showing that the sequence $\{x_{n}\}$ is maximizing, and completing the proof.\end{proof}
    
    The necessary condition thus obtained can be seen as the non-compact counterpart of the necessary part of Theorem \ref{th-char}. Now, as another application of Lemma \ref{l-conv}, we give a sufficient condition for a convex extension of a bounded continuous function on a non-compact metric space to be a Birkhoff-James extension.
    
    \begin{theorem}\label{nc-suff} Let $(X,d)$ be a non-compact metric space and $f:X\to \mathbb{R}$ be bounded and continuous. Let $p:X\times\mathbb{R}\to \mathbb{R}$ be a convex extension of $f$. Then, $p$ is a Birkhoff-James extension if there exist maximizing sequences $\{x_{n}\}$ and $\{y_{n}\}$ of $f$, a $\delta\textgreater 0$, and two sequences $t_{n}\to 0+$ and $s_{n}\to 0-$ such that $p(x_{n}, \rule{3mm}{0.1mm})$ and $p(y_{n}, \rule{3mm}{0.1mm})$ converge pointwise to $g$ on $(-\delta, \delta)$, limsup$_{n\to \infty}p_{t+}(x_{n}, t_{n})\geq0$ and liminf$_{n\to \infty} p_{t-}(y_{n}, s_{n}-)\leq0$.\end{theorem}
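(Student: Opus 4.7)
The plan is to exploit the fact, already noted in the paper, that $g$ is convex on $\mathbb{R}$. For a convex function, the one-sided inequality $g(t)\geq g(0)$ for all $t\in\mathbb{R}$ is equivalent to the pair of slope conditions $g^{\prime}(0+)\geq 0$ and $g^{\prime}(0-)\leq 0$. So the theorem reduces to deriving these two inequalities from the two asymptotic hypotheses provided. The hypothesis on $\{x_n\}$ and $t_n\to 0+$ will give the first, and symmetrically the hypothesis on $\{y_n\}$ and $s_n\to 0-$ will give the second.

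For the right-hand inequality, I would fix $\varepsilon>0$ and use right-continuity of the right derivative of the convex function $g$ to pick $s\in(0,\delta)$ with $g^{\prime}(s+)<g^{\prime}(0+)+\varepsilon$. Since $p(x_n,\rule{3mm}{0.15mm})$ are convex functions on $(-\delta,\delta)$ converging pointwise to $g$, Lemma \ref{l-conv} yields $\limsup_{n\to\infty}p_{t+}(x_n,s)\leq g^{\prime}(s+)$. Because $t_n\to 0+$, eventually $t_n<s$, and the right derivative of a convex function is non-decreasing, so $p_{t+}(x_n,t_n)\leq p_{t+}(x_n,s)$ for all large $n$. Passing to $\limsup$ and invoking the hypothesis $\limsup_{n\to\infty}p_{t+}(x_n,t_n)\geq 0$ gives $0\leq g^{\prime}(0+)+\varepsilon$, and letting $\varepsilon\to 0$ yields $g^{\prime}(0+)\geq 0$.

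The argument for $g^{\prime}(0-)\leq 0$ is the mirror image. The one auxiliary fact needed is the left-derivative analogue of Lemma \ref{l-conv}, namely $\liminf_{n\to\infty}f_n^{\prime}(t-)\geq f^{\prime}(t-)$ for pointwise-convergent convex functions; this I would obtain either by applying Lemma \ref{l-conv} to the reflections $t\mapsto f_n(-t)$ or by repeating its proof verbatim with left-difference quotients. Combining this with left-continuity of $g^{\prime}(\cdot-)$ at $0$ and with monotonicity of $p_{t-}(y_n,\rule{3mm}{0.15mm})$ (so that $s<s_n$ implies $p_{t-}(y_n,s)\leq p_{t-}(y_n,s_n)$), the hypothesis $\liminf_{n\to\infty}p_{t-}(y_n,s_n)\leq 0$ forces $g^{\prime}(0-)\leq 0$.

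The main source of friction is not conceptual but bookkeeping: one must check that the comparison points $s$ and $s_n$ (respectively $s$ and $t_n$) lie on the correct sides of each other so that convex monotonicity of the one-sided derivatives pushes the inequality in the direction compatible with Lemma \ref{l-conv}. In particular, the right-derivative version requires the auxiliary point $s$ to lie to the \emph{right} of the sequence $t_n$, while the left-derivative version requires the auxiliary point to lie to the \emph{left} of the sequence $s_n$; getting these asymmetries consistent with the monotonicity of $p_{t\pm}(x,\cdot)$ is the only delicate moment, after which the proof is a routine deployment of the subdifferential machinery already used in Theorems \ref{th-suff} and \ref{th-char}.
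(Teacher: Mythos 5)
Your proof is correct and follows essentially the same route as the paper: reduce to $g^{\prime}(0+)\geq 0$ and $g^{\prime}(0-)\leq 0$, pick an auxiliary point $t_0\in(0,\delta)$ with $g^{\prime}(t_0+)\leq g^{\prime}(0+)+\varepsilon$, apply Lemma \ref{l-conv} to the pointwise-convergent convex functions $p(x_n,\rule{3mm}{0.15mm})$, and use monotonicity of the one-sided derivatives to compare at $t_n<t_0$. Your explicit remark that the left-hand case needs the reflected (left-derivative, $\liminf$) version of Lemma \ref{l-conv} is a detail the paper elides with ``similarly,'' but it is handled correctly.
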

    
    \begin{proof} First, we show that $g^{\prime}(0+)\geq0$. Let $\varepsilon\textgreater0$. We choose $t_{0}\in (0, \delta)$ such that $g^{\prime}(t_{0}+)\leq g^{\prime}(0+)+ \varepsilon$. Since $p(x_{n}, t)$ converges to $g(t)$ for all $t\in (-\delta, \delta)$, Lemma \ref{l-conv} implies that limsup$_{n\to\infty} p_{t+}(x_{n}, t_{0})
    \leq g^{\prime}(t_{0}+)$. Choosing $N\in \mathbb{N}$ such that $t_{n}\textless t_{0}$ for all $n\geq N$, we see that $p_{t+}(x_{n}, t_{n})\leq p_{t+}(x_{n}, t_{0})$ for all $n\geq N$. Therefore, limsup$_{n\to \infty}p_{t+}(x_{n}, t_{n})\leq$ limsup$_{n\to \infty}p_{t+}(x_{n}, t_{0})\leq g^{\prime}(t_{0}+)\leq g^{\prime}(0+)+ \varepsilon$. Since limsup$_{n\to\infty}p_{t+}(x_{n}, t_{n})\geq0$, it follows that $g^{\prime}(0+) + \varepsilon \geq0$. Since $\varepsilon\textgreater0$ was arbitrary, $g^{\prime}(0+)\geq0$.
    
    Similarly, using the sequence $\{y_{n}\}$ in place of $\{x_{n}\}$ and the sequence $\{s_{n}\}$ in place of $t_{n}$, we conclude that $g^{\prime}(0-)\leq0$, thus showing that $p$ is a Birkhoff-James extension and completing the proof. \end{proof}
    
    We conclude this section with the following conjecture :
    
    \begin{conjecture} The conclusion of Theorem \ref{nc-suff} fails to hold if the assumption regarding the pointwise convergence of $p(x_{n}, \rule{3mm}{0.15mm})$ and $p(y_{n}, \rule{3mm}{0.15mm})$ to $g$ on $(-\delta, \delta)$ for some $\delta\textgreater0$ is removed.\end{conjecture}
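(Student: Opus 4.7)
The plan is to attempt an explicit counterexample to Theorem \ref{nc-suff} with the pointwise convergence hypothesis deleted. A natural testbed is $X=\mathbb{N}$ with the Euclidean metric, $f(n)=1-1/n$ (so $\sup f=1$ is never attained and every sequence $n_{k}\to\infty$ is maximizing), and $p(n,t)=f(n)+q_{n}(t)$ with each $q_{n}\colon\mathbb{R}\to\mathbb{R}$ convex and $q_{n}(0)=0$. One would split $\mathbb{N}$ into two families: one family $\{a_{j}\}$ engineered so that $\sup_{j}p(a_{j},t_{0})$ stays strictly below $1$ for some fixed $t_{0}>0$, thereby driving $g(t_{0})<g(0)$; the other $\{b_{j}\}$ engineered to be maximizing with piecewise-linear $q_{b_{j}}$ whose slope climbs from $-1$ to $0$ at a kink point much smaller than $t_{0}$, so that $\limsup p_{t+}(b_{j},t_{j})\geq 0$ is realized for some $t_{j}\to 0+$ lying past the kink. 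Symmetric choices would handle the negative side.

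The essential steps are: (i) pick a single continuous modulus $h$ with $h(0)=0$ dominating every $|q_{n}(t)-q_{n}(s)|$, so that property (iii) of Definition \ref{def-nc} holds uniformly; (ii) compute $\sup_{j}p(a_{j},t)$ explicitly and verify that $g(t_{0})$ falls strictly below $g(0)$; (iii) check that the flattening of the slopes for $\{b_{j}\}$ near $t_{j}$ can be arranged without the $\{b_{j}\}$-family itself raising $g(t_{0})$ back up to $g(0)$.

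The main obstacle, and one I expect will defeat every such construction, is the following rigidity. Property (iii) forces $|p(b_{j},t_{j})-f(b_{j})|\leq |h(t_{j})|\to 0$ for any maximizing $\{b_{j}\}$ and any $t_{j}\to 0+$, so $p(b_{j},t_{j})\to g(0)$. The subgradient inequality for the convex function $t\mapsto p(b_{j},t)$ then gives $p(b_{j},t)\geq p(b_{j},t_{j})+(t-t_{j})\,p_{t+}(b_{j},t_{j})$ for every $t\geq t_{j}$. Extracting, for each $\eta>0$, a subsequence along which $p_{t+}(b_{j},t_{j})\geq -\eta$, taking $\liminf$ at a fixed $t_{0}>0$, and using $p(b_{j},t_{j})\to g(0)$, one obtains $g(t_{0})\geq g(0)-\eta t_{0}$; letting $\eta\downarrow 0$ forces $g(t_{0})\geq g(0)$. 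A symmetric estimate using $\{y_{n}\}$ and $s_{n}$ gives $g(t_{0})\geq g(0)$ for $t_{0}<0$.

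Hence the pointwise convergence assumption in Theorem \ref{nc-suff} appears to be redundant, and the conjecture as stated \emph{seems to be false}: any $p$ meeting the remaining hypotheses is already a Birkhoff--James extension. A proof of the conjecture as written would have to locate a loophole in the rigidity argument above -- for instance by weakening property (iii), by permitting a modulus $h$ that is discontinuous at $0$, or by using a broader notion of ``maximizing sequence'' than $f(x_{n})\to\sup f$ -- but within Definition \ref{def-nc} no such loophole is apparent to me. The cleanest honest proposal is therefore to formalize the rigidity estimate above as a proof of the strengthening of Theorem \ref{nc-suff} in which the convergence hypothesis is dropped, thereby refuting the conjecture rather than establishing it.
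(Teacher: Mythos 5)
The paper states this result only as a conjecture and supplies no proof, so there is nothing of the author's to compare your attempt against; what you have written is not a proof of the conjecture but a refutation of it, and in my reading the refutation is correct. The chain of estimates holds up: property (iii) of Definition \ref{def-nc} gives $|p(x_{n},t_{n})-p(x_{n},0)|\leq |h(t_{n})-h(0)|\to 0$, and since $\{x_{n}\}$ is maximizing this forces $p(x_{n},t_{n})\to g(0)$; the right derivative of the convex function $p(x_{n},\cdot)$ at $t_{n}$ is a subgradient, so for fixed $t_{0}>0$ and $n$ large enough that $t_{n}<t_{0}$,
$$g(t_{0})\geq p(x_{n},t_{0})\geq p(x_{n},t_{n})+(t_{0}-t_{n})\,p_{t+}(x_{n},t_{n});$$
for each $\eta>0$ the hypothesis $\limsup_{n\to\infty}p_{t+}(x_{n},t_{n})\geq 0$ supplies a subsequence along which $p_{t+}(x_{n},t_{n})>-\eta$, whence $(t_{0}-t_{n})p_{t+}(x_{n},t_{n})\geq -\eta t_{0}$ (this covers both the case where the derivative is nonnegative and the case where it lies in $[-\eta,0)$), and passing to the limit along that subsequence gives $g(t_{0})\geq g(0)-\eta t_{0}$. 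Letting $\eta\downarrow 0$ yields $g(t_{0})\geq g(0)$, and the mirror-image estimate with $\{y_{n}\}$, $s_{n}\to 0-$ and the left derivative handles $t_{0}<0$. Every ingredient is available from Definition \ref{def-nc} alone (in particular $g(t)<\infty$ by property (i) and the one-sided derivatives exist by convexity), so the pointwise-convergence hypothesis of Theorem \ref{nc-suff} is redundant and the conjecture as stated is false.

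Two points to make explicit if you formalize this. First, state clearly that the subsequence extraction is performed separately for each $\eta$ --- the sequence $p_{t+}(x_{n},t_{n})$ need not converge --- and that $p(x_{n_k},t_{n_k})\to g(0)$ persists along every subsequence, which is what lets you pass to the limit. Second, it is worth observing that your subgradient argument is strictly simpler than the paper's own proof of Theorem \ref{nc-suff}: it bypasses Lemma \ref{l-conv}, the differentiability points of $g$, and the pointwise convergence entirely, and proves the stronger statement directly. The only escape routes for the conjecture are the ones you already identify (e.g.\ allowing $h$ to be discontinuous at $0$, or weakening the notion of maximizing sequence), and none of them is permitted by Definition \ref{def-nc}. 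I would recommend presenting this as a strengthening of Theorem \ref{nc-suff} together with an explicit remark that it settles the conjecture in the negative, rather than as a ``proof'' of the statement as posed.
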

    
    \section{An Application}
    
    As an application of some of the ideas presented in this paper, we now offer a simple characterization of Birkhoff-James orthogonality in $C(X)$, the space of all continuous real valued functions on $X$ with the supremum norm, where $(X,d)$ is a compact metric space.\medskip
    
   We state the result below and prove it, thereby concluding the paper.\medskip

   We denote by sgn the signum function on $\mathbb{R}$.
    
    \begin{theorem}\label{th-orth_char} Let $f,g \in C(X)$ where $(X,d)$ is a compact metric space, such that $f$ is not identically $0$. Then, $f\perp_{B}g$ if and only if there exist $x,y \in M_{|f|}=\{x\in X:|f(x)|=||f||\}$ such that $f(x)g(x)\geq 0$ and $f(y)g(y)\leq 0$.
    \end{theorem}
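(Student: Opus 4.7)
The plan is to reduce the question to a direct application of Theorem~\ref{th-char} via a well-chosen convex extension of $|f|$. Set $F:X\to\mathbb{R}$ by $F(x)=|f(x)|$ and define $p:X\times\mathbb{R}\to\mathbb{R}$ by $p(x,t)=|f(x)+tg(x)|$. For each fixed $x$ the function $t\mapsto p(x,t)$ is the absolute value of an affine function, hence convex; $p$ is jointly continuous; and $p(x,0)=|f(x)|=F(x)$. Thus $p$ is a convex extension of $F$ in the sense of the compact-domain definition. Moreover $\sup_{x\in X}p(x,t)=\|f+tg\|$ and $\sup_{x\in X}p(x,0)=\|f\|=\|F\|$, so the condition that $p$ be a Birkhoff-James extension of $F$ is \emph{literally} the statement $f\perp_B g$, and $M_F=M_{|f|}$.

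Next I would invoke Theorem~\ref{th-char}: $p$ is a Birkhoff-James extension of $F$ if and only if there exist $x,y\in M_{|f|}$ with $p_{t+}(x,0)\geq 0$ and $p_{t-}(y,0)\leq 0$. The task then reduces to translating these two sub-differential inequalities into the sign conditions $f(x)g(x)\geq 0$ and $f(y)g(y)\leq 0$. For any $x\in M_{|f|}$ we must have $f(x)\neq 0$, for otherwise $\|f\|=0$ contradicting the hypothesis that $f$ is not identically zero. So for $x\in M_{|f|}$ and $t$ sufficiently close to $0$, the sign of $f(x)+tg(x)$ agrees with $\mathrm{sgn}(f(x))$, whence
\[
p(x,t)=\mathrm{sgn}(f(x))\,(f(x)+tg(x))\quad\text{for }t\text{ near }0,
\]
and differentiation gives $p_{t+}(x,0)=p_{t-}(x,0)=\mathrm{sgn}(f(x))\,g(x)$.

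Consequently $p_{t+}(x,0)\geq 0$ is equivalent to $\mathrm{sgn}(f(x))\,g(x)\geq 0$, which (since $f(x)\neq 0$) is equivalent to $f(x)g(x)\geq 0$; similarly $p_{t-}(y,0)\leq 0$ is equivalent to $f(y)g(y)\leq 0$. Combining these translations with Theorem~\ref{th-char} yields the biconditional in both directions simultaneously, completing the proof. No step looks like a genuine obstacle; the only thing to be a little careful about is ruling out $f(x)=0$ on $M_{|f|}$ and confirming that the one-sided derivatives of $|f(x)+tg(x)|$ at $t=0$ coincide when $f(x)\neq 0$, both of which are immediate from the standing hypothesis.
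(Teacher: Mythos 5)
Your proposal is correct and follows essentially the same route as the paper: the same convex extension $p(x,t)=|f(x)+tg(x)|$, the same reduction to Theorem~\ref{th-char}, and the same computation identifying $p_{t\pm}(x,0)$ with $\mathrm{sgn}(f(x))\,g(x)$ for $x\in M_{|f|}$. No gaps.
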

    
    \begin{proof} Given continuous functions $f$ and $g$ on $X$ such that $f$ is not identically zero, let $p: X\times \mathbb{R}\to \mathbb{R}$ be defined by $p(x,t)=|f(x)+ tg(x)|$ for all $(x,t)\in X\times \mathbb{R}$. Then it's easy to observe that $f\perp_{B} g$ if and only if $p$ is a Birkhoff-James extension of $|f|$. By Theorem \ref{th-char}, $f\perp_{B}g$ if and only if there exist $x,y\in M_{|f|}$ such that $p_{t+}(x,0)\geq 0$ and $p_{t-}(y,0)\leq 0$. 
    
    Since both $x$ and $y$ are from the norm attaining set of $f$, neither $f(x)$ nor $f(y)$ is $0$. Now, $p_{t+}(x,0)\geq 0$ if and only if $$\lim_{t\to 0+}\frac{|f(x) + tg(x)| - |f(x)|}{t}=g(x)\textrm{sgn}(f(x))\geq 0$$
    
    But $g(x)$sgn$(f(x))\geq 0$ if and only if $g(x)f(x)\geq 0$. Hence, $p_{t+}(x,0)\geq 0$ if and only if $g(x)f(x)\geq 0$. Similarly, $p_{t-}(y,0)\leq 0$ if and only if $f(y)g(y)\leq 0$. Thus, $f\perp_{B}g$ if and only if there exist $x,y\in M_{|f|}$ such that $f(x)g(x)\geq 0$ and $f(y)g(y)\leq 0$. This completes the proof.\end{proof}

    \noindent  \textbf{Acknowledgement}.\\
    The author would like to express his sincere gratitude to Prof. Kallol Paul, Department  of Mathematics, Jadavpur University, Kolkata, India,  who went through the manuscript meticuluously and suggested a lot of necessary improvements regarding the presentation of the material. He has been immensely helpful throughout this article's course of preparation.

    \bibliographystyle{amsplain}

\end{document}